\DeclareSymbolFont{cyrletters}{OT2}{wncyr}{m}{n}
\numberwithin{equation}{section} \numberwithin{figure}{section}
\DeclareMathOperator{\Gal}{Gal}
\DeclareMathSymbol{\Sha}{\mathalpha}{cyrletters}{"58}
\newcommand{\code}[1]{\texttt{#1}}
\newcommand\Z{\mathbb{Z}}
\newcommand\Q{\mathbb{Q}}
\newtheorem{lemma}{Lemma}
\newtheorem{thm}[lemma]{Theorem}
\newtheorem{prop}[lemma]{Proposition}
\theoremstyle{definition}
\newtheorem{defn}[lemma]{Definition}
\newtheorem{rem}[lemma]{Remark}
\newtheorem*{theorem*}{Theorem}
\DeclareSymbolFont{cyrletters}{OT2}{wncyr}{m}{n}
\DeclareMathSymbol{\Sha}{\mathalpha}{cyrletters}{"58}
\DeclareSymbolFont{cyrletters}{OT2}{wncyr}{m}{n}
\DeclareMathSymbol{\Sha}{\mathalpha}{cyrletters}{"58}
\numberwithin{lemma}{section}
\begin{document}

\title[The Hasse norm principle for $A_n$-extensions]
{The Hasse norm principle for $A_n$-extensions}

\author{\sc André Macedo}

\address{André Macedo\\
Department of Mathematics and Statistics\\ University of Reading\\
Whiteknights, PO Box 220\\
Reading RG6 6AX\\
UK}
   \email{c.a.v.macedo@pgr.reading.ac.uk}
\urladdr{https://sites.google.com/view/andre-macedo}

\subjclass[2010]
{14G05 (primary),  
11E72, 11R37, 
20D06 (secondary).}

\begin{abstract}
	We prove that, for every $n \geq 5$, the Hasse norm principle holds for a degree $n$ extension $K/k$ of number fields with normal closure $F$ such that $\operatorname{Gal}(F/k) \cong A_n$. We also show the validity of weak approximation for the associated norm one tori.

\end{abstract}
\maketitle

\section{Introduction}

Let $K/k$ be an extension of number fields with associated idèle groups $\mathbb{A}^{*}_{K}$ and $\mathbb{A}^{*}_{k}$ and let $\operatorname{N}_{K/k} : \mathbb{A}^{*}_{K} \to \mathbb{A}^{*}_{k}$ be the norm map on the idèles. We can view $K^*$ (respectively, $k^*$) as sitting inside $\mathbb{A}^{*}_{K}$ (respectively, $\mathbb{A}^{*}_{k}$) via the diagonal embedding and $\operatorname{N}_{K/k}$ naturally extends the usual norm map of the extension $K/k$. We say that the \textit{Hasse norm principle} (often abbreviated to HNP) holds for $K/k$ if the knot group

\medskip
\begin{center}
    $\mathfrak{K}(K/k):= (k^* \cap \operatorname{N}_{K/k}(\mathbb{A}^{*}_{K})) / \operatorname{N}_{K/k}(K^*)$
\end{center}
\medskip

\noindent is trivial, i.e. if every nonzero element of $k$ which is a local norm everywhere is a global norm.

The first example of the validity of this principle was established in 1931 by Hasse, who proved that the knot group $\mathfrak{K}(K/k)$ is trivial if $K/k$ is a cyclic extension (the Hasse norm theorem). Since then, much work has been done in the abelian case (see, for instance, \cite{frei}, \cite{gerth} or \cite{horie}), but results for the non-abelian and non-Galois cases are still limited. For example, if $F$ denotes the normal closure of $K/k$, it is known that the HNP holds for $K/k$ when

\begin{itemize}
    \item $[K:k]$ is prime (\cite{bar1});
    \item $[K:k]=n$ and $\operatorname{Gal}(F/k) \cong D_n$, the dihedral group of order $2n$ (\cite{bar2});
    \item $[K:k]=n$ and $\operatorname{Gal}(F/k) \cong S_n$, the symmetric group on $n$ letters (\cite{kun1}).

\end{itemize}
\smallskip

In this paper, we study the HNP for a degree $n$ extension $K/k$ with normal closure $F$ such that $\operatorname{Gal}(F/k)$ is isomorphic to $A_n$, the alternating group on $n$ letters.  We also look at \textit{weak approximation} - recall that this property is said to hold for a variety $X /k$ if $X(k)$ is dense (for the product topology) in $\prod\limits_{v} X(k_v)$, where the product is taken over all places $v$ of $k$ and $k_v$ denotes the completion of $k$ with respect to $v$. In particular, we examine weak approximation for the norm one torus $T=R^{1}_{K/k} \mathbb{G}_m$ associated to a degree $n$ extension of number fields $K/k$ with $A_n$-normal closure. 

The first non-trivial case is $n=3$. In this case, $K=F$ is a Galois extension of $k$ and the Hasse norm theorem tells us that the HNP holds for $K/k$. Moreover, using a result of Voskresenski\u{\i}, one can show that weak approximation holds for the associated norm one torus. In \cite{kun2} Kunyavski\u{\i} solved the case $n=4$ by showing that, for a quartic extension $K/k$ with $A_4$-normal closure, $\mathfrak{K}(K/k)$ is either $0$ or $\Z/2$ and both cases can occur. Additionally, he proved that the HNP holds for $K/k$ if and only if weak approximation fails for $T$. In this paper, we use several cohomological results about $A_n$-modules to prove the following theorem.

\begin{thm}\label{thman}
Let $n \geq 5$ be an integer. Let $K/k$ be a degree $n$ extension of number fields and let $F$ be its normal closure. If $\Gal(F/k) \cong A_n$, then the Hasse norm principle holds for $K/k$ and weak approximation holds for the norm one torus $T=R^{1}_{K/k} \mathbb{G}_m$.
\end{thm}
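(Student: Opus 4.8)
The plan is to reduce both assertions to the vanishing of a single, purely group-theoretic invariant attached to the pair $(A_n, A_{n-1})$, and then to compute that invariant. First I would translate everything into the Galois cohomology of $T = R^1_{K/k}\Gm$. Taking cohomology of the defining sequence $1 \to T \to R_{K/k}\Gm \xrightarrow{\operatorname{N}} \Gm \to 1$, Hilbert~90 and Shapiro's lemma give $H^1(k,T) \cong k^*/\operatorname{N}_{K/k}(K^*)$, and similarly over each $k_v$, so that the knot group is identified with $\mathfrak{K}(K/k) \cong \Sha^1(k,T)$, the kernel of $H^1(k,T) \to \prod_v H^1(k_v,T)$. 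For a smooth compactification $X$ of $T$, Voskresenski\u{\i}'s exact sequence $0 \to A(T) \to H^1(k,\Pic\overline{X})^\vee \to \Sha^1(k,T) \to 0$ exhibits the defect of weak approximation $A(T)$ and the knot group as a sub- and a quotient-group of the single group $H^1(k,\Pic\overline{X})^\vee$. Since $\Pic\overline{X}$ is split by $F$, inflation gives $H^1(k,\Pic\overline{X}) \cong H^1(A_n,\Pic\overline{X})$, and by the work of Colliot-Th\'el\`ene--Sansuc this equals $\Sha^2_\omega(A_n,\widehat{T})$, the kernel of the restriction map $H^2(A_n,\widehat{T}) \to \prod_{g \in A_n} H^2(\langle g\rangle,\widehat{T})$ to all cyclic subgroups. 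Thus it suffices to prove $\Sha^2_\omega(A_n,\widehat{T}) = 0$; as this group is independent of $k$, it yields the HNP and weak approximation for every extension with group $A_n$ simultaneously.

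Next I would set up the module. With $H = A_{n-1}$ the stabiliser of a point, $\Z[A_n/A_{n-1}]$ is the natural permutation module $\Z^n$, and $\widehat{T} = \Z^n/\Z$ sits in $0 \to \Z \to \Z^n \to \widehat{T} \to 0$. The associated long exact sequence, Shapiro's lemma ($H^i(A_n,\Z^n) \cong H^i(A_{n-1},\Z)$), the perfectness of $A_n$ for $n \geq 5$ (so $H^2(A_n,\Z) = \Hom(A_n,\Q/\Z) = 0$, which already yields $H^1(A_n,\widehat{T}) = 0$), and the identification of $H^3(A_m,\Z) \cong H_2(A_m,\Z)^\vee$ with the dual Schur multiplier would let me compute $H^2(A_n,\widehat{T})$. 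For $n \geq 6$ it is isomorphic to the kernel of the restriction map on Schur multipliers $H^3(A_n,\Z) \to H^3(A_{n-1},\Z)$; in the stable range this restriction is an isomorphism $\Z/2 \to \Z/2$, so $H^2(A_n,\widehat{T}) = 0$ and $\Sha^2_\omega$ vanishes trivially. The only surviving contributions come from the exceptional Schur multipliers of $A_5, A_6, A_7$ (the extra $3$-torsion) and, for $n = 5$, from $H^2(A_4,\Z) = \Z/3$.

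The main obstacle is therefore to handle the finitely many small cases $n \in \{5,6,7\}$ by showing that each nonzero class of $H^2(A_n,\widehat{T})$ restricts nontrivially to some cyclic subgroup, so that the localisation map is injective and $\Sha^2_\omega(A_n,\widehat{T}) = 0$. For this I would compute $H^2(\langle g\rangle,\widehat{T})$ directly from the cycle type of $g$: restricted to $\langle g\rangle$ the permutation module $\Z^n$ decomposes as a direct sum of modules $\Z[\langle g\rangle/\langle g^\ell\rangle]$, one for each cycle of $g$, and from this one reads off the Tate cohomology of $\widehat{T}$ over $\langle g\rangle$ together with the restriction of the relevant generator. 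Choosing $g$ with an appropriate cycle structure (for instance an element of order $3$ to detect the $3$-torsion classes) should exhibit a cyclic subgroup on which the class survives. Verifying this injectivity uniformly across the exceptional cases --- keeping track of the exceptional covers of $A_6$ and $A_7$ and of the interaction between the $2$- and $3$-primary parts --- is the delicate, computational heart of the argument. It is worth noting that the perfectness of $A_n$ for $n \geq 5$ is precisely what removes the obstruction present in Kunyavski\u{\i}'s $n=4$ case and makes the clean simultaneous vanishing possible.
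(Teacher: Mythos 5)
Your global strategy coincides with the paper's: identify $\mathfrak{K}(K/k)$ with $\Sha(T)$, combine Voskresenski\u{\i}'s exact sequence with the Colliot-Th\'el\`ene--Sansuc identification $\operatorname{H}^1(k,\operatorname{Pic}\overline{X}) \cong \Sha^2_{\omega}(A_n,\hat{T})$, and compute the latter from the long exact sequence of $0 \to \Z \to \Z[A_n/A_{n-1}] \to \hat{T} \to 0$ using Shapiro's lemma and the perfectness of $A_n$ and $A_{n-1}$. The reduction to the injectivity of $\operatorname{Res}\colon \operatorname{H}^3(A_n,\Z) \to \operatorname{H}^3(A_{n-1},\Z)$ is also the paper's. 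However, there are two genuine gaps. The first is your assertion that ``in the stable range this restriction is an isomorphism $\Z/2 \to \Z/2$''. This injectivity --- equivalently, by the duality of Tate cohomology, the surjectivity of the corestriction $M(A_{n-1}) \to M(A_n)$ on Schur multipliers --- is the central technical point of the entire argument and is not automatic: a homomorphism between two copies of $\Z/2$ can perfectly well be zero. The paper proves it for all $n \geq 8$ (Lemma \ref{lemcor}) by constructing a Schur covering group of $A_n$ inside Schur's double cover of $S_n$, checking that its restriction over a copy of $A_{n-1}$ is still a stem extension, and invoking Drakokhrust--Platonov's characterisation of the image of the corestriction. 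If you wish to bypass this, you must cite a homology stability result for $\operatorname{H}_2(A_n)$ with an explicit range; and note that $n=8$ is not covered by your phrase, since $M(A_7)=\Z/6$, so the relevant map is $\Z/2 \to \Z/6$ and its injectivity still needs an argument --- you have silently dropped $n=8$ from your list of delicate cases.

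The second gap concerns $n \in \{5,6,7\}$: you propose to show that every nonzero class of $\operatorname{H}^2(A_n,\hat{T})$ restricts nontrivially to some cyclic subgroup, but you do not carry out this computation and yourself describe it as the ``delicate, computational heart'' of the argument. (Note also that for $n=5$ the term $\operatorname{H}^2(A_5,\Z[A_5/A_4]) \cong \operatorname{Hom}(A_4,\Q/\Z) \cong \Z/3$ does not vanish, so $\operatorname{H}^2(A_5,\hat{T})$ is not simply the kernel of a restriction map between Schur multipliers, and the analysis is more involved than your sketch suggests.) The paper takes a different and complete route here: for $n=5$ and $n=7$ it applies Proposition 9.1 of Colliot-Th\'el\`ene--Sansuc (prime degree forces the flasque module to be invertible, hence $\operatorname{H}^1(G,M)=0$, which kills both $A(T)$ and $\Sha(T)$ at once), and for $n=6$ it performs an explicit GAP computation of $\operatorname{H}^1(G,M)$ via the Hoshi--Yamasaki flasque-resolution algorithms for both conjugacy classes of $A_5$ inside $A_6$. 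Your cyclic-detection plan is plausible and would be an interesting alternative if completed, but as written neither the stable range nor the exceptional cases are actually established.
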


The layout of this paper is as follows. In Section 2, we use various cohomological and group-theoretic tools to establish the injectivity of an important restriction map on the cohomology of $A_n$. In Section 3, we look at the consequences of these results in the arithmetic of number fields. In particular, combining the results of Section 2 with the work of Colliot-Thélène and Sansuc on flasque resolutions and a theorem of Voskresenski\u{\i}, we prove Theorem \ref{thman} for $n \neq 6$. In Section 4, we exploit a computational method developed by Hoshi and Yamasaki to solve the remaining case $n=6$. All of the code used in Section 4 is provided in the Appendix. 

\medskip

\subsection*{Notation}
Throughout this paper, we fix the following notation.

\bigskip

\begin{longtable}{p{1cm} p{13cm}}
$k$ & a number field; \\
$\overline{k}$ & an algebraic closure of $k$; \\
$v$ & a place of $k$; \\
$k_v$ & the completion of $k$ at $v$.
\end{longtable}

\medskip
For a variety $X$ over a field $K$, we use the notation

\bigskip
\begin{longtable}{p{3cm} p{11.5cm}}
$X_L=X \times_{K} L$ & the base change of $X$ to a field extension $L/K$; \\
$\overline{X}=X \times_{K} \overline{K}$ & the base change of $X$ to an algebraic closure of $K$; \\
$\operatorname{Pic}X$ & the Picard group of $X$.
\end{longtable}

\vspace{5pt}
We define $\mathbb{G}_{m,K}=\operatorname{Spec}(K[t,t^{-1}])$ to be the multiplicative group over a field $K$ and, if $K$ is apparent from the context, we omit it from the subscript and simply write $\mathbb{G}_{m}$. Given a $K$-torus $T$, we write $\hat{T}$ for its character group $\operatorname{Hom}(\overline{T},\mathbb{G}_{m,\overline{K}})$. If $L/K$ is a finite extension of fields and $T$ is an $L$-torus, we denote the Weil restriction of $T$ from $L$ to $K$ by $ R_{L/K} T$. We use the notation $R^{1}_{L/K} \mathbb{G}_m$ for the norm one torus, defined as the kernel of the norm map $ \operatorname{N}_{L/K} : R_{L/K}\mathbb{G}_m \to \mathbb{G}_{m}$.

Let $G$ be a finite group. The label `$G$-module' shall always mean a free $\Z$-module of finite rank equipped with a right action of $G$. For a $G$-module $A$ and $q \in \Z$, we denote the Tate cohomology groups by $\hat{\operatorname{H}}^q(G,A)$ and the kernel of the restriction map $\hat{\operatorname{H}}^q(G,A) \xrightarrow[]{\operatorname{Res}} \prod\limits_{g \in G} \hat{\operatorname{H}}^q(\langle g \rangle,A)$ by $\Sha_{\omega}^q(G,A)$. Since $\hat{\operatorname{H}}^q(G,A)=\operatorname{H}^q(G,A)$ for $q \geq 1$, we will omit the hat in this case. We also use the notation $Z(G)$, $[G,G]$, $G^{\sim}$ and $M(G)$ for the center, the derived subgroup, the dual group $\operatorname{Hom}(G, \Q/\Z)$ and the Schur multiplier $\hat{\operatorname{H}}^{-3}(G,\Z)$ of $G$, respectively. Given elements $g,h \in G$, we use the conventions $[g,h]=gh g^{-1} h^{-1}$ and $g^h=h g h^{-1}$.

\pagebreak
\subsection*{Acknowledgements.} I would like to thank my supervisor Rachel Newton for suggesting this problem and for helpful comments on an earlier version of this manuscript. I am also grateful to Prof. Boris Kunyavski\u{\i} for bringing up the importance of Lemma \ref{lemcor}. This work was supported by the FCT doctoral scholarship SFRH/BD/117955/2016.

\section{Group cohomology of $A_n$-modules}
The goal of this section is to establish several cohomological facts about $A_n$-modules. We start by stating some useful group-theoretic facts.

\begin{rem}\label{shmult}
Recall that, for $n \geq 5$, $A_n$ is a non-abelian simple group and hence perfect. Moreover, its Schur multiplier $M(A_n)=\hat{\operatorname{H}}^{-3}(A_{n},\mathbb{Z})$ is given as follows (see Theorem 2.11 of \cite{HH92}):

\[ M(A_n) = \begin{cases*}
                    0  & if $n \leq 3$ \\
                    \Z / 2  & if $n \in \{4,5\}$ or $n \geq 8$ \\
                    \Z / 6 & if  $n \in \{ 6,7 \}$.
                 \end{cases*} \]

\end{rem}

Given a copy $H$ of $A_{n-1}$ inside $G=A_n$, we have a corestriction map on cohomology 

\begin{center}
    
$\operatorname{Cor}^{H}_{G}:M(H) \to M(G)$. 
\end{center}

\smallskip
This map will play an important role for us later, so we begin by establishing the following result.

\begin{lemma}\label{lemcor}
Let $n \geq 8$ and let $H$ be a copy of $A_{n-1}$ inside $G=A_n$. Then, the corestriction map $\operatorname{Cor}^{H}_{G}$ is surjective.
\end{lemma}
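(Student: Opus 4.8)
The plan is to reduce the statement to a nonvanishing assertion and then detect that nonvanishing on a small abelian subgroup. Since $n \geq 8$, Remark \ref{shmult} gives $M(G) = M(A_n) \cong \Z/2$, so $\operatorname{Cor}^{H}_{G}$ is surjective if and only if it is nonzero. I would record two standard facts about corestriction. First, the composite $\operatorname{Cor}^{H}_{G} \circ \operatorname{Res}^{G}_{H}$ is multiplication by the index $[G:H] = n$ on $M(G)$; when $n$ is odd this is an automorphism of $\Z/2$, and surjectivity of $\operatorname{Cor}^{H}_{G}$ is immediate. Second — and this is what is needed when $n$ is even — under the canonical identification $M(G) = \hat{\operatorname{H}}^{-3}(G,\Z) \cong \operatorname{H}_2(G,\Z)$, the map $\operatorname{Cor}^{H}_{G}$ is the map on $\operatorname{H}_2$ induced by the inclusion $H \inj G$. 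As both $H$ and $G$ are perfect, the universal coefficient theorem identifies $\operatorname{H}^2(\,\cdot\,,\Q/\Z)$ with $\operatorname{Hom}(\operatorname{H}_2(\,\cdot\,,\Z),\Q/\Z)$, and under this duality $\operatorname{Cor}^{H}_{G}$ is adjoint to the restriction $\operatorname{Res}^{G}_{H} : \operatorname{H}^2(G,\Q/\Z) \to \operatorname{H}^2(H,\Q/\Z)$. Hence $\operatorname{Cor}^{H}_{G}$ is surjective precisely when $\operatorname{Res}^{G}_{H}$ is injective.

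Translated into central extensions, the generator of the $2$-part of $\operatorname{H}^2(G,\Q/\Z)$ is the class of the double cover $\widetilde{A_n} \to A_n$, and its restriction to $H$ is the class of the pullback along $H \inj G$, i.e. the preimage of $H$ in $\widetilde{A_n}$. So the task becomes: show this preimage is a \emph{nonsplit} extension of $A_{n-1}$. I would verify this by restricting one step further. Fix the Klein four subgroup $V = \langle (1\,2)(3\,4),\,(1\,3)(2\,4)\rangle$, which lies in $A_4 \leq H = A_{n-1}$ (valid since $n \geq 5$). Because restriction is transitive, $\operatorname{Res}^{G}_{V} = \operatorname{Res}^{H}_{V} \circ \operatorname{Res}^{G}_{H}$, so it suffices to prove that the double cover class restricts nontrivially to $V$: this forces $\operatorname{Res}^{G}_{H}$ to be nonzero, hence injective on the $2$-part $\Z/2$, and we are done.

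The nontriviality over $V$ is the classical computation that the preimage of a four-group generated by two double transpositions in the spin double cover $\widetilde{A_n}$ is the quaternion group $Q_8$ rather than $\Z/2 \times V$: lifts of $(1\,2)(3\,4)$ and $(1\,3)(2\,4)$ square to the central involution and anticommute. Equivalently, $\operatorname{Res}^{G}_{V}$ of the cover class is the nonzero element of $\operatorname{H}^2(V,\Z/2)$ classifying $Q_8$. I regard this $Q_8$-computation as the one genuine input; everything else is formal bookkeeping. The case $n = 8$, where $M(H) = M(A_7) \cong \Z/6$ rather than $\Z/2$, is unaffected: injectivity of $\operatorname{Res}^{G}_{H}$ on the $2$-part $\Z/2$ of $\operatorname{H}^2(G,\Q/\Z)$ still yields that $\operatorname{Cor}^{H}_{G}$ maps onto the $2$-group $M(G) \cong \Z/2$.

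The main obstacle is thus purely the even case: the index identity is useless there since $n$ annihilates $\Z/2$, so one cannot avoid invoking the structure of the double cover. Making the $Q_8$-computation clean and self-contained — either through an explicit Clifford-algebra / Pin-group model of $\widetilde{A_n}$, or by citing the compatibility of the Schur covers $\widetilde{A_{n-1}} \hookrightarrow \widetilde{A_n}$ with the standard embeddings $A_{n-1} \inj A_n$ — is where the real care is required; the reduction steps above are routine.
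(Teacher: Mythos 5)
Your argument is correct, but it follows a genuinely different route from the paper. The paper invokes Lemma 4 of Drakokhrust--Platonov, which identifies $\operatorname{Cor}^{H}_{G}(M(H))$ with $M(G)\cap[W,W]$ for $W=\pi^{-1}(H)$ inside the Schur cover $V$ of $A_n$, and then proves $z\in[W,W]$ directly via the explicit commutator identity $z=[e_1^{-1}e_2e_1,e_2]$ in Schur's presentation (i.e.\ it shows $W$ is a stem extension of $H$). You instead dualize: surjectivity of $\operatorname{Cor}^{H}_{G}$ on $M(\cdot)\cong \operatorname{H}_2(\cdot,\Z)$ is equivalent to injectivity of $\operatorname{Res}^{G}_{H}$ on $\operatorname{H}^2(\cdot,\Q/\Z)$, you dispose of odd $n$ with $\operatorname{Cor}\circ\operatorname{Res}=[G:H]$, and for the remaining case you detect nonvanishing of the double-cover class by restricting to a Klein four-group of double transpositions, whose preimage in $2\cdot A_n$ is $Q_8$ and hence nonsplit. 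This buys you independence from the Drakokhrust--Platonov machinery and replaces the paper's ad hoc commutator computation with the classical $Q_8$ fact (visible already in $2\cdot A_4\cong \mathrm{SL}(2,3)$); at bottom the two computations are cousins, since the $Q_8$ statement amounts to exhibiting $z$ as a commutator of lifts of elements of $H$. Two small points to tighten: (i) the statement concerns an arbitrary copy $H$ of $A_{n-1}$ in $A_n$, so you should justify that $H$ contains a four-group generated by two double transpositions of $A_n$ --- as in the paper, this follows from the fact that for $n\geq 7$ every such $H$ is conjugate to the point stabilizer, and inner automorphisms act trivially on cohomology; (ii) you should note that the class of the extension $Q_8$ of the four-group survives the passage from $\operatorname{H}^2(V,\Z/2)$ to $\operatorname{H}^2(V,\Q/\Z)\cong M(V)^{\sim}\cong\Z/2$, which holds because $Q_8$ is nonabelian (the class is detected by the commutator pairing), not merely because the extension is nonsplit. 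With those additions the proof is complete.
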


In order to prove this lemma, we will use multiple results about covering groups of $S_n$ and $A_n$ together with the characterization of the image of $\operatorname{Cor}^{H}_{G}$ given in Lemma 4 of \cite{DP87}. To put this plan into practice, we need the following concepts.

\begin{defn}
Let $G$ be a finite group. A \textit{stem extension} of $G$ is a group $\widetilde{G}$ containing a normal subgroup $K$ such that $\widetilde{G} / K \cong G$ and $K \subseteq Z(\widetilde{G}) \cap [\widetilde{G},\widetilde{G}]$. A \textit{Schur covering group} of $G$ is a stem extension of $G$ of maximal size.
\end{defn}

It is a well-known fact that a stem extension of a finite group $G$ always exists (see Theorem 2.1.4 of \cite{kar}). Additionally, the base normal subgroup $K$ of a Schur covering group $\widetilde{G}$ of $G$ coincides with its Schur multiplier $\hat{\operatorname{H}}^{-3}(G,\Z)$ (see Section 9.9 of \cite{gru}). In \cite{S11}, Schur completely classified the Schur covering groups of $S_n$ and $A_n$. He also gave an explicit presentation of a cover of $S_n$, as follows.

\begin{prop}\label{pres}
Let $n \geq 4$ and let $U$ be the group with generators $z,t_1,\cdots, t_{n-1}$ and relations
\begin{enumerate}
    \item $z^2=1$;
    \item $z t_i = t_i z $, \textnormal{for $1 \leq i \leq n-1$};
    \item $t_i^2=z$, \textnormal{for $1 \leq i \leq n-1$};
    \item $(t_i t_{i+1})^3 =z $, \textnormal{for $1 \leq i \leq n-2$};
    \item $t_i t_j = z t_j t_i$, \textnormal{for $|i-j| \geq 2$ and $1 \leq i,j \leq n-1$}.
\end{enumerate}
\medskip

Then $U$ is a Schur covering group of $S_n$ with base normal subgroup $K = \langle z \rangle $. Moreover, if $\overline{t_i}$ denotes the transposition $(i \hspace{5pt} i+1)$ in $S_n$, then the map

\begin{align*}
  \pi \colon U &\longrightarrow S_n \\
  z &\longmapsto 1 \\
  t_i &\longmapsto \overline{t_i}
\end{align*}

\noindent is surjective and has kernel $K$.

\end{prop}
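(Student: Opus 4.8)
The plan is to treat the ``moreover'' clause about $\pi$, which follows formally from the presentation, separately from the claim that $U$ is a genuine Schur cover, whose only non-formal ingredient is the non-triviality of $z$. First I would check that $\pi$ is well defined. Reducing the defining relations modulo $z$ (that is, imposing $z \mapsto 1$) makes $(1)$ and $(2)$ trivial and turns $(3)$, $(4)$, $(5)$ into $\overline{t_i}^2 = 1$, $(\overline{t_i}\,\overline{t_{i+1}})^3 = 1$ and $\overline{t_i}\,\overline{t_j} = \overline{t_j}\,\overline{t_i}$ for $|i-j| \geq 2$; these are exactly the Coxeter relations of the standard presentation of $S_n$ on the adjacent transpositions $\overline{t_i} = (i\ i{+}1)$. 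Hence $\pi$ respects all the relations and is well defined, and since the adjacent transpositions generate $S_n$ it is surjective.

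Next I would identify $\ker \pi$ by comparing presentations rather than by counting cosets. As $z$ is central by $(2)$, the subgroup $\langle z\rangle$ is normal, and adjoining the relation $z = 1$ to the presentation of $U$ yields a presentation of the quotient $U/\langle z\rangle$ on the generators $t_1,\dots,t_{n-1}$ with relations $t_i^2 = 1$, $(t_it_{i+1})^3 = 1$ and $t_it_j = t_jt_i$ for $|i-j|\geq 2$ --- precisely the Coxeter presentation of $S_n$. Thus $U/\langle z\rangle \cong S_n$ via $t_i \mapsto (i\ i{+}1)$, and since $\pi$ induces exactly this isomorphism, its kernel is exactly $K = \langle z\rangle$. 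Relation $(1)$ gives $|K| \leq 2$, so $|U| \leq 2\cdot n!$, with equality if and only if $z \neq 1$.

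It remains to verify the stem-extension conditions and the non-triviality of $z$. Centrality of $K$ is relation $(2)$, and for $n \geq 4$ the indices $i=1$, $j=3$ satisfy $1\leq i,j\leq n-1$ and $|i-j|\geq 2$, so $(5)$ gives $z = t_1t_3t_1^{-1}t_3^{-1} = [t_1,t_3] \in [U,U]$; hence $K \subseteq Z(U)\cap[U,U]$ and $U$ is a stem extension of $S_n$. The single point not forced by the presentation is $z \neq 1$: the essential input is that the Schur multiplier $M(S_n)$ is non-zero (in fact $\cong \Z/2$) for $n \geq 4$, equivalently that $S_n$ admits a genuine non-split double cover. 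Concretely, Schur's spin representation provides complex matrices satisfying $(1)$--$(5)$ with $z$ acting as $-I$, which forces $z \neq 1$ in $U$. I expect this non-triviality to be the main obstacle to a self-contained write-up, everything else being a formal manipulation of the presentation.

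Granting $z \neq 1$, the base $K$ has order $2 = |M(S_n)|$, which is the largest possible order for the base of a stem extension of $S_n$; by the characterisation recalled above of the base of a Schur covering group as the Schur multiplier, $U$ is therefore a Schur covering group of $S_n$ with base normal subgroup $K = \langle z\rangle$, of order $|U| = 2\cdot n!$. Together with the identification of $\ker\pi$, this establishes the proposition.
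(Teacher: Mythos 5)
The paper gives no argument for this proposition at all: its ``proof'' is a pointer to Schur's 1911 paper and to Chapter 2 of Hoffman--Humphreys. Your write-up is therefore a genuinely different (and more informative) route, and it is essentially correct: reducing the relations modulo $z$ recovers the Coxeter presentation of $S_n$, which gives well-definedness and surjectivity of $\pi$ and identifies $\ker\pi=\langle z\rangle$; relation (2) gives centrality; and $[t_1,t_3]=t_1t_3t_1^{-1}t_3^{-1}=z$ (valid precisely when $n\geq 4$, so $t_3$ exists) puts $z$ in $[U,U]$. You also correctly isolate the one non-formal ingredient, namely $z\neq 1$. The only point where you should be slightly more careful is the final step: exhibiting the spin representation shows $z\neq 1$, i.e.\ gives the lower bound $|M(S_n)|\geq 2$, but concluding that the stem extension $U$ is of \emph{maximal} size additionally requires the upper bound $|M(S_n)|\leq 2$, which is the genuinely hard computation in Schur's paper and is a second external input, not a consequence of the representation. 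Note also that the paper's Remark immediately after the proposition \emph{deduces} $M(S_n)\cong\Z/2$ from the proposition, so if one wanted to splice your argument into the paper one would have to import $|M(S_n)|\leq 2$ from elsewhere to avoid circularity. What your approach buys is transparency about exactly which parts are formal consequences of the presentation and which require Schur's classical computation; what the paper's citation buys is brevity and the avoidance of that bookkeeping.
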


\begin{proof}
See Schur's original paper \cite{S11} or Chapter 2 of \cite{HH92} for a more modern exposition.
\end{proof}

\begin{rem}
An immediate consequence of this last proposition is that the Schur multiplier of $S_n$ is isomorphic to $\Z / 2$ for $n \geq 4$.
\end{rem}

Using the Schur cover of $S_n$ given in Proposition \ref{pres}, one can also construct a Schur covering group of $A_n$ for $n \geq 8$.

\begin{lemma}\label{schurcov}
In the notation of Proposition \ref{pres}, the group $V:=\pi^{-1}(A_n)$ defines a Schur covering group of $A_n$ for every $n \geq 8$.
\end{lemma}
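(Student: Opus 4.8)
The plan is to exhibit $V$ as a central extension $1 \to K \to V \xrightarrow{\pi} A_n \to 1$ with $K = \langle z\rangle \cong \Z/2$, to show this extension is a \emph{stem} extension, and finally to check that it has maximal size. First I would record the basic structure. Since $[S_n:A_n]=2$ and $K=\ker\pi$ is contained in $\pi^{-1}(A_n)=V$, the restriction $\pi|_V\colon V \to A_n$ is surjective with kernel $K$, so $V/K \cong A_n$ and $|V|=|U|/2=n!=2|A_n|$. By relation (2) of Proposition \ref{pres} the element $z$ is central in $U$, whence $K \subseteq Z(U)\cap V \subseteq Z(V)$. Thus $V$ is a central extension of $A_n$ by $K\cong\Z/2$.

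The heart of the matter is to prove $K \subseteq [V,V]$, equivalently that this central extension does not split. I would establish non-splitness by an order computation. The image $\pi(t_1 t_3)=(1\,2)(3\,4)$ is an involution in $A_n$ (here $n\geq 4$), yet using relations (3) and (5) one computes $(t_1 t_3)^2 = t_1(t_3 t_1)t_3 = t_1(z\,t_1 t_3)t_3 = z\,t_1^2 t_3^2 = z^3 = z \neq 1$, and likewise $(z\,t_1 t_3)^2 = z$. Hence \emph{both} preimages in $V$ of the involution $(1\,2)(3\,4)$ have order $4$. Were the extension split, a section $s\colon A_n \to V$ would furnish a preimage $s\bigl((1\,2)(3\,4)\bigr)$ of order dividing $2$, a contradiction; therefore the extension is non-split.

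Next I would upgrade non-splitness to the stem condition via perfectness. For $n\geq 5$, $A_n$ is perfect (Remark \ref{shmult}), so $[V,V]$ surjects onto $A_n$ and hence $[V,V]\,K=V$. Since $|K|=2$, either $z\in[V,V]$ or $K\cap[V,V]=1$; in the latter case $V=[V,V]\times K \cong A_n \times \Z/2$, so the extension splits, which has just been excluded. Therefore $z\in[V,V]$, and together with centrality this shows that $V$ is a stem extension of $A_n$.

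Finally, maximality. For $n\geq 8$ we have $M(A_n)=\Z/2$ (Remark \ref{shmult}), so the base normal subgroup of a Schur covering group of $A_n$ has order $2$ and the maximal size of a stem extension of $A_n$ is $|M(A_n)|\cdot|A_n|=2|A_n|=|V|$. As $V$ is a stem extension of exactly this size, it is of maximal size and hence a Schur covering group. The main obstacle is the non-splitness step: the order computation for the preimages of $(1\,2)(3\,4)$ is precisely what distinguishes $V$ from the trivial cover $A_n\times\Z/2$, and perfectness of $A_n$ (so that $n\geq 5$) is essential to convert non-splitness into $K\subseteq[V,V]$. The hypothesis $n\geq 8$ itself enters only through the Schur multiplier computation needed for maximality.
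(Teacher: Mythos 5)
Your proof is correct, but it reaches the key conclusion $K \subseteq [V,V]$ by a genuinely different route from the paper. The paper's proof is purely computational: it takes the generators $e_i = t_1 t_{i+1}$ of $V$ and verifies the explicit commutator identity $z = [e_1^{-1}e_2e_1, e_2]$ from the relations $(e_1e_2)^3 = z$, $e_1^3 = z$, $e_i^2 = z$. You instead show the central extension is non-split (both preimages of the involution $(1\,2)(3\,4)$ have order $4$, via $(t_1t_3)^2 = z$) and then use perfectness of $A_n$ to rule out the only splitting scenario $V \cong [V,V] \times K$, forcing $z \in [V,V]$. Your argument is more conceptual and arguably cleaner as a proof of this one lemma; the order computation is easier to verify than the commutator identity, and the reduction from non-splitness to the stem condition via perfectness is a standard and transparent step. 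What the paper's explicit identity buys, however, is reusability: in Lemma \ref{schurcov2} the author conjugates the identity $z = [e_1^{-1}e_2e_1, e_2]$ by an element $x$ with $\overline{e_1}, \overline{e_2}$ in the point stabilizer to conclude $z \in [W,W]$ for $W = \pi^{-1}(H)$, $H \cong A_{n-1}$. Your method would also adapt to that setting (conjugates of $(1\,2)(3\,4)$ lie in $H$ and their preimages still have order $4$, and $A_{n-1}$ is perfect for $n \geq 6$), so nothing essential is lost, but it would require restating the argument rather than conjugating a single identity. Your treatment of maximality --- matching $|V| = 2|A_n|$ against $|M(A_n)|\cdot|A_n|$ using Remark \ref{shmult} --- is the same as the paper's.
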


\begin{proof}
It is well-known that $A_n$ is generated by the $n-2$ permutations $\overline{e_i}:=\overline{t_1} . \overline{t_{i+1}}=(1 \hspace{5pt} 2) (i+1 \hspace{5pt} i+2)$ for $1 \leq i \leq n-2$. Hence, $V=\pi^{-1}(A_n)$ is generated by $z,e_1,\cdots,e_{n-2}$, where $e_i:=t_1 t_{i+1}$ for $1 \leq i \leq n-2$. Clearly, we have $K \subseteq Z(V)$ and $V / K \cong A_n$. As the Schur multiplier of $A_n$ is also $\Z /2$ for $n \geq 8$, in order to show that $V$ defines a Schur covering group of $A_n$ it suffices to prove that $K \subseteq [V,V]$.

\medskip

\noindent \textbf{Claim:} $z=[e_1^{-1} e_2 e_1 , e_2]$.

\noindent \textbf{Proof of claim:} This follows from a standard computation using the identities $(e_1 e_2)^3=z$, $e_1^3=z$ and $e_i^2=z$ for $2 \leq i \leq n-2$, which follow directly from the relations satisfied by the $t_i$.

\medskip
Given the claim, it follows that $K = \langle z \rangle$ is contained in $[V,V]$, as desired.

\end{proof}

Given a copy $H$ of $A_{n-1}$ inside $A_n$, one can subsequently repeat the same procedure of this last lemma and further restrict $\pi$ to $W:=\pi^{-1}(H)$ to seek a Schur covering group of $H$. The same argument works, but with two small caveats.

First, it is necessary to assure that we still have $z \in [W,W]$. This is indeed the case since, for $n \geq 7$, any subgroup $H \leq A_n$ isomorphic to $A_{n-1}$ is conjugate to the point stabilizer $(A_{n})_n$ of the letter $n$ in $A_n$ (this is a consequence of Lemma 2.2 of \cite{wil}). Therefore, we have $H= {(A_{n})_n}^{\pi(x)}$ for some $x \in U$ and hence $z=z^{x}=[e_1^{-1} e_2 e_1,e_2]^x=[(e_1^{-1} e_2 e_1)^x,e_2^x] $ is in $ [W,W]$, as clearly $\overline{e_1},\overline{e_2} \in (A_{n})_n $. 

Second, note that we are making use of the fact that the Schur multipliers of $A_{n-1}$ and $S_n$ coincide, which is only true for $n \geq 9$ (recall that $M(A_7)=\Z/6$). However, it is still true that $\pi^{-1}(A_{7})$ gives a (non-maximal) stem extension of $A_7$ by the same reasoning as above. We have thus established the following result.

\begin{lemma}\label{schurcov2}
Let $n \geq 8$ and let $H$ be a copy of $A_{n-1}$ inside $A_n$. Then, the restriction to $W=\pi^{-1}(H)$ of the Schur cover $V$ of $A_n$ given in Lemma \ref{schurcov} defines a stem extension of $H$. 
\end{lemma}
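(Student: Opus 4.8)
The plan is to verify directly the three defining properties of a stem extension for $W = \pi^{-1}(H)$ sitting over $H$, taking the base normal subgroup to be $K = \langle z \rangle$. By construction $\pi$ restricts to a surjection $\pi|_W \colon W \to H$ with kernel $K$, so $W/K \cong H$ is immediate. Since $z$ is central in $U$ (relation (2) of Proposition \ref{pres}), it is a fortiori central in $W$; this gives both the normality of $K$ in $W$ and the inclusion $K \subseteq Z(W)$. The only substantive point that remains is to establish $K \subseteq [W,W]$, that is, to show that $z$ lies in the derived subgroup of $W$.

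To prove $z \in [W,W]$, I would first reduce to a canonical choice of $H$. By the conjugacy result for large alternating subgroups (Lemma 2.2 of \cite{wil}), for $n \geq 7$ every copy $H$ of $A_{n-1}$ in $A_n$ is conjugate to the point stabilizer $(A_n)_n$; writing $H = (A_n)_n^{\pi(x)}$ for some $x \in U$, we have $W = x\,\pi^{-1}((A_n)_n)\,x^{-1}$. Next I would invoke the commutator identity $z = [e_1^{-1} e_2 e_1, e_2]$ proved in the claim within Lemma \ref{schurcov}. Because the images $\overline{e_1} = (1\,2)(2\,3)$ and $\overline{e_2} = (1\,2)(3\,4)$ both fix the letter $n$ (as $n \geq 5$), the elements $e_1, e_2$ lie in $\pi^{-1}((A_n)_n)$, and hence their $x$-conjugates lie in $W$. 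Conjugating the identity by $x$ and using the centrality of $z$ then gives
\[
z = z^x = [e_1^{-1} e_2 e_1, e_2]^x = [(e_1^{-1} e_2 e_1)^x,\, e_2^x] \in [W,W].
\]
Combined with $K \subseteq Z(W)$, this yields $K \subseteq Z(W) \cap [W,W]$, so $W$ is a stem extension of $H$.

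I expect the main obstacle to be precisely the passage from an arbitrary copy $H$ to the concrete point stabilizer $(A_n)_n$: without the conjugacy input of \cite{wil} one cannot guarantee that a fixed pair of elements whose lifts exhibit $z$ as a commutator actually map into $H$, and the whole argument for $z \in [W,W]$ hinges on this. A secondary point worth flagging is that the argument only produces a \emph{stem} extension, not necessarily a maximal one: for $n = 8$ one has $M(A_7) = \Z/6$ while $|K| = 2$, so $W$ is a proper stem extension of $A_7$; it is only for $n \geq 9$, where $M(A_{n-1}) = \Z/2$ matches $|K|$, that $W$ is a full Schur covering group of $A_{n-1}$.
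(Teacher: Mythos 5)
Your proposal is correct and follows essentially the same route as the paper: the paper also reduces to the point stabilizer $(A_n)_n$ via Lemma 2.2 of \cite{wil}, conjugates the identity $z=[e_1^{-1}e_2e_1,e_2]$ by a lift $x$ to land in $[W,W]$, and notes that for $n=8$ one only obtains a (non-maximal) stem extension since $M(A_7)=\Z/6$. No gaps to report.
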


We can now prove Lemma \ref{lemcor}.

\begin{proof}[Proof of Lemma \ref{lemcor}]

Let $V$ be the Schur covering group of $G$ constructed in Lemma \ref{schurcov}. We then have a central extension

\begin{equation*}
    1 \to M(G) \to V \xrightarrow[]{\pi} G \to 1,
\end{equation*}

\medskip

\noindent where we identified the base normal subgroup $K$ of $V$ with the Schur multiplier $M(G)$ of $G$. Since $M(G) \subset [V,V]$ by the definition of a Schur cover, $V$ is a \textit{generalized representation group} of $G$, as defined on p. 310 of \cite{DP87}. Therefore, by Lemma 4 of \cite{DP87}, we have an isomorphism $\operatorname{Cor}^{H}_{G}(M(H)) \cong M(G) \cap [W,W]$, where $W=\pi^{-1}(H)$. Hence, it is enough to show that $M(G) \cap [W,W] = M(G)$. By Lemma \ref{schurcov2}, $W$ defines a stem extension of $H$ for $n \geq 8$, so that we immediately get $M(G) \subset [W,W]$. It follows that $M(G) \cap [W,W] = M(G)$, as desired.
\end{proof}

In order to proceed with our cohomological analysis, we need to recollect some group-theoretic objects. Let $H$ be a subgroup of a finite group $G$. Recall that we have the augmentation map $\epsilon : \Z[G/H] \to \Z $ defined by $\epsilon : Hg \mapsto 1$ for any $ Hg \in G/H$. This map produces the exact sequence of $G$-modules

\begin{equation}\label{eps}
    0 \to I_{G/H} \to \Z[G/H] \xrightarrow[]{\epsilon} \Z \to 0 ,
\end{equation}

\medskip
\noindent where $I_{G/H}=\operatorname{ker}(\epsilon)$ is the augmentation ideal. Dually, we also have a map $\eta : \Z \to \Z[G/H] $ defined by $\eta : 1 \mapsto N_{G/H}$, where $N_{G/H}=\sum\limits_{Hg \in G/H} Hg$. This produces the exact sequence of $G$-modules

\begin{equation}\label{eta}
    0 \to \Z \xrightarrow[]{\eta} \Z[G/H] \to J_{G/H}  \to 0 ,
\end{equation}

\medskip
\noindent where $J_{G/H}=\operatorname{coker}(\eta)$ (called the Chevalley module of $G/H$) is the dual module $\operatorname{Hom}(I_{G/H},\Z)$ of $I_{G/H}$.

\medskip

For any $g \in G$, we can consider the restriction maps 

\begin{equation*}
    \operatorname{Res}_{g}: \operatorname{H^2}(G,J_{G/H}) \to \operatorname{H}^2(\langle g \rangle, J_{G/H})
\end{equation*}

\medskip
\noindent and aggregate all of these functions together in order to get a homomorphism of $G$-modules

\begin{equation*}
    \operatorname{Res}: \operatorname{H^2}(G,J_{G/H}) \to \prod\limits_{g \in G} \operatorname{H}^2(\langle g \rangle, J_{G/H}).
\end{equation*}

It turns out that the kernel of this map (denoted by $\Sha^2_{\omega}(G,J_{G/H})$) is of extreme importance in the arithmetic of number fields, as we will see in the next section. We describe this kernel for our case of interest $G=A_n$, $H\cong A_{n-1}$ and $n \geq 8$ (the cases $n \leq 7$ will be treated separately).

\medskip
\begin{prop}\label{sha2}

Let $n \geq 8$ and let $H$ be a copy of $A_{n-1}$ inside $G=A_n$. Then, we have $\Sha^2_{\omega}(G,J_{G/H} )=0$.
\end{prop}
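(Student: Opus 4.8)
The plan is to read off $\operatorname{H}^2(G, J_{G/H})$ from the cohomology long exact sequence attached to the defining sequence (\ref{eta}) and to reduce everything to the corestriction map on Schur multipliers controlled by Lemma \ref{lemcor}. Since $H \cong A_{n-1}$ is, up to conjugacy, the stabiliser of a point, $\Z[G/H] = \operatorname{Ind}_H^G \Z$ is the natural permutation module on $n$ letters, so Shapiro's lemma gives $\operatorname{H}^q(G, \Z[G/H]) \cong \operatorname{H}^q(H, \Z)$ for all $q$. Under this identification the map induced by $\eta \colon \Z \to \Z[G/H]$ is the restriction $\operatorname{Res}^G_H$ (one checks this in degree $0$, where $\eta$ sends the generator to $N_{G/H}$, i.e. to $1$ under Shapiro, so both maps are the identity there and agree throughout by naturality). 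Writing out (\ref{eta}) in cohomology therefore produces
\begin{equation*}
\operatorname{H}^2(G,\Z) \xrightarrow{\operatorname{Res}} \operatorname{H}^2(H,\Z) \to \operatorname{H}^2(G, J_{G/H}) \to \operatorname{H}^3(G,\Z) \xrightarrow{\operatorname{Res}} \operatorname{H}^3(H,\Z).
\end{equation*}

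Next I would evaluate the outer terms using perfectness. For any finite group $G'$ one has $\operatorname{H}^2(G',\Z) \cong \operatorname{Hom}(G', \Q/\Z)$ and $\operatorname{H}^3(G',\Z) \cong M(G')^{\sim}$, via the connecting isomorphisms for $0 \to \Z \to \Q \to \Q/\Z \to 0$, the vanishing of rational cohomology in positive degrees, and universal coefficients. As $n \geq 8$, both $G = A_n$ and $H \cong A_{n-1}$ are perfect (Remark \ref{shmult}), so $\operatorname{H}^2(G,\Z) = \operatorname{H}^2(H,\Z) = 0$. The displayed sequence then collapses to
\begin{equation*}
0 \to \operatorname{H}^2(G, J_{G/H}) \to \operatorname{H}^3(G,\Z) \xrightarrow{\operatorname{Res}} \operatorname{H}^3(H,\Z),
\end{equation*}
so that $\operatorname{H}^2(G, J_{G/H})$ is identified with the kernel of the restriction map on $\operatorname{H}^3(\cdot,\Z)$.

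To finish I would invoke Tate (universal-coefficients) duality: under $\operatorname{H}^3(\cdot,\Z) \cong M(\cdot)^{\sim}$, the restriction $\operatorname{Res} \colon \operatorname{H}^3(G,\Z) \to \operatorname{H}^3(H,\Z)$ is the $\Q/\Z$-dual of the corestriction $\operatorname{Cor}^H_G \colon M(H) \to M(G)$ on Schur multipliers. By Lemma \ref{lemcor} this corestriction is surjective, hence its dual is injective, so $\ker(\operatorname{Res}) = 0$ and $\operatorname{H}^2(G, J_{G/H}) = 0$. Since $\Sha^2_\omega(G, J_{G/H})$ is by definition a subgroup of $\operatorname{H}^2(G, J_{G/H})$, it vanishes as well; in fact this argument yields the stronger statement that the whole second cohomology is trivial, with no appeal to the cyclic subgroups actually needed here.

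The only genuine obstacle lies in the final step: one must verify that the cohomological restriction on $\operatorname{H}^3(\cdot,\Z)$ really is dual to the homological corestriction $\operatorname{Cor}^H_G = \hat{\operatorname{H}}^{-3}(\operatorname{incl})$ of Lemma \ref{lemcor}, and not to some other transfer map, since the entire collapse hinges on transporting the surjectivity of Lemma \ref{lemcor} faithfully across this duality. The identifications of the outer terms with $\operatorname{Hom}(\cdot,\Q/\Z)$ and $M(\cdot)^{\sim}$, and the recognition of $\eta_*$ as $\operatorname{Res}$, are routine once the duality is correctly oriented.
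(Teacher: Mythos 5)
Your proposal is correct and follows essentially the same route as the paper: take the long exact cohomology sequence of \eqref{eta}, kill $\operatorname{H}^2(G,\Z[G/H])\cong\operatorname{H}^2(H,\Z)$ by perfectness of $H$, identify the connecting map into $\operatorname{H}^3(G,\Z[G/H])\cong\operatorname{H}^3(H,\Z)$ with $\operatorname{Res}^G_H$, and deduce its injectivity from the surjectivity of $\operatorname{Cor}^H_G$ on Schur multipliers (Lemma \ref{lemcor}) via the duality theorem for Tate cohomology of finite groups. The duality step you flag as the ``only genuine obstacle'' is exactly the one the paper also invokes (citing Section VI.7 of \cite{brown}), where restriction and corestriction are adjoint for the cup-product pairing $\hat{\operatorname{H}}^{3}\times\hat{\operatorname{H}}^{-3}\to\hat{\operatorname{H}}^{0}$, so it is the intended (and correct) transfer map.
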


\begin{proof}
Taking the $G$-cohomology of the exact sequence \eqref{eta} gives the exact sequence of abelian groups

\begin{equation*}
 \operatorname{H}^2(G,\Z[G/H]) \to \operatorname{H}^2(G,J_{G/H}) \to \operatorname{H}^3(G,\Z) \xrightarrow[]{\overline{\eta}} \operatorname{H}^3(G,\Z[G/H]),
\end{equation*}
 
 \medskip
 \noindent where $\overline{\eta}$ is the map induced on the degree $3$ cohomology groups by the norm map $\eta$. Applying Shapiro's lemma and using the fundamental duality theorem in the cohomology of finite groups (see, for example, Section VI.7 of \cite{brown}), we have $\operatorname{H}^2(G,\Z[G/H]) \cong \operatorname{H}^2(H,\Z) \cong \hat{\operatorname{H}}^{-2}(H,\Z) \cong H/[H,H] = 0$, as $H$ is perfect. Therefore, this last exact sequence becomes

\begin{equation*}
0 \to \operatorname{H}^2(G,J_{G/H}) \to \operatorname{H}^3(G,\Z) \xrightarrow[]{\overline{\eta}} \operatorname{H}^3(G,\Z[G/H]),
\end{equation*}

\medskip

\noindent which shows that $\operatorname{H}^2(G,J_{G/H})=0$ if $\overline{\eta}$ is injective. Since the composition of the map $\overline{\eta}$ with the isomorphism in Shapiro's lemma 

\begin{center}
    $\operatorname{H}^3(G,\Z) \xrightarrow[]{\overline{\eta}} \operatorname{H}^3(G,\Z[G/H]) \xrightarrow[]{\cong} \operatorname{H}^3(H,\Z)$
\end{center}

\noindent gives the restriction map (see Example 1.27(b) of \cite{milne}), it is enough to prove that the restriction 

 \begin{center}
     $\operatorname{Res}^{G}_{H}: \operatorname{H}^3(G,\Z) \to \operatorname{H}^3(H,\Z)$
 \end{center} 
 
 \noindent is injective. Again, by the duality in the cohomology of finite groups, this is the same as proving that the corestriction map (dual to $\operatorname{Res}^{G}_{H}$)
 
 \begin{center}
     $\operatorname{Cor}^{H}_{G}: \hat{\operatorname{H}}^{-3}(H,\Z) \to \hat{\operatorname{H}}^{-3}(G,\Z)$
 \end{center} 
 
 \noindent is surjective. But this is the content of Lemma \ref{lemcor}, so it follows that $\operatorname{H}^2(G,J_{G/H})$ is trivial and therefore $\Sha_{\omega}^2(G,J_{G/H})=0$, as desired.

\end{proof}

\section{Arithmetic consequences}

\medskip

In this section, we delve into the consequences of the cohomological results of Section 2 in the arithmetic of number fields. In particular, we will recall how the group $ \Sha^2_{\omega}(G,J_{G/H})$ governs two important local-global principles, the Hasse norm principle and weak approximation. Specifying to the case $G=A_n$, $H \cong A_{n-1}$ and using Proposition \ref{sha2}, we will prove Theorem \ref{thman} for $n \geq 8$. The remaining cases ($n \leq 7$) will be solved using a result of Colliot-Thélène and Sansuc and a computational method adapted from work of Hoshi and Yamasaki. 

\medskip

Let $k$ be a number field and let $T$ be a $k$-torus. We introduce the defect to weak approximation for $T$

\begin{equation*}
A(T)=(\prod\limits_{v}T(k_v))/\overline{T(k)},
\end{equation*}
\noindent where the product is taken over all places $v$ of $k$ and $\overline{T(k)}$ denotes the closure (with respect to the product topology) of $T(k)$ in $\prod\limits_{v}T(k_v)$. We say that weak approximation holds for $T$ if and only if $A(T)=0$. 

We also define the Tate-Shafarevich group of $T$ as 

\begin{equation*}
\Sha(T)=\operatorname{ker}(\operatorname{H}^1(k,T) \to \prod\limits_{v}\operatorname{H}^1(k_v,T_{k_v})),
\end{equation*}

\noindent where the product runs over all places $v$ of $k$. It is known that this group controls the validity of the Hasse principle for every principal homogeneous space under $T$. In fact, the Hasse principle holds for every such space if and only if $\Sha(T)=0$. 

\smallskip

The following result remarkably connects weak approximation with the Hasse principle by combining the two groups $A(T)$ and $\Sha(T)$ in an exact sequence.

\begin{thm}[Voskresenski\u{\i}]\label{thmvosk}
Let $T$ be a torus defined over a number field $k$ and let $X/k$ be a smooth projective model of $T$. Then there exists an exact sequence

\begin{equation*}
    0 \to A(T) \to \operatorname{H}^1(k,\operatorname{Pic}\overline{X})^{\sim} \to \Sha(T) \to 0.
\end{equation*}

\end{thm}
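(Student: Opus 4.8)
The plan is to reduce the whole statement to the cohomology of an auxiliary flasque torus attached to $X$, and then to read off the sequence from the nine-term Poitou--Tate exact sequence. First I would invoke the theory of flasque resolutions of Colliot-Thélène and Sansuc: the components of the boundary $\overline{X} \setminus \overline{T}$ assemble into a permutation module $P$ sitting in an exact sequence of Galois modules $0 \to \hat{T} \to P \to \operatorname{Pic}\overline{X} \to 0$, which exhibits $\operatorname{Pic}\overline{X}$ as a flasque lattice. Dualising, this produces an exact sequence of $k$-tori $1 \to S \to Q \to T \to 1$ in which $Q$ is quasi-trivial (so $\hat{Q}=P$) and $S$ is the flasque torus with character module $\hat{S}=\operatorname{Pic}\overline{X}$.

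Next I would collect the vanishing statements that make the quasi-trivial torus $Q$ transparent: by Hilbert 90 and Shapiro's lemma one has $\operatorname{H}^1(k,Q)=0$ and $\operatorname{H}^1(k_v,Q)=0$ for every place $v$, weak approximation holds for $Q$ (that is, $A(Q)=0$), and $\Sha^1(k,Q)=\Sha^2(k,Q)=0$, the latter being the Albert--Brauer--Hasse--Noether theorem for the Brauer groups of the fields underlying $Q$. Feeding these into the long exact cohomology sequences of $1 \to S \to Q \to T \to 1$, both globally and over each $k_v$, yields the local isomorphisms $T(k_v)/\operatorname{im} Q(k_v) \cong \operatorname{H}^1(k_v,S)$ together with an identification $\Sha(T)=\Sha^1(k,T)\cong \Sha^2(k,S)$ of the Hasse-principle defect.

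The weak approximation term needs more care. Here one uses that $\operatorname{H}^1(k_v,S)$ is finite and vanishes for all but finitely many $v$ (a consequence of the flasqueness of $\hat{S}$), so that $\prod_v \operatorname{H}^1(k_v,S)=\bigoplus_v \operatorname{H}^1(k_v,S)$ is discrete; the density of $Q(k)$ in $\prod_v Q(k_v)$ then forces the closure $\overline{T(k)}$ to be precisely the preimage of the image of the global group $\operatorname{H}^1(k,S)$. This gives $A(T)\cong \operatorname{coker}\bigl(\operatorname{H}^1(k,S)\to \bigoplus_v \operatorname{H}^1(k_v,S)\bigr)$. I expect this topological bookkeeping --- matching the closure $\overline{T(k)}$ with a cohomological preimage, and keeping track of which local factors are nonzero --- to be the main technical obstacle.

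Finally I would apply the nine-term Poitou--Tate exact sequence for the torus $S$, whose dual module is $\hat{S}=\operatorname{Pic}\overline{X}$; its middle portion reads $\operatorname{H}^1(k,S)\to \bigoplus_v \operatorname{H}^1(k_v,S)\to \operatorname{H}^1(k,\operatorname{Pic}\overline{X})^{\sim}\to \operatorname{H}^2(k,S)\to \bigoplus_v \operatorname{H}^2(k_v,S)$. Exactness at the two inner terms identifies $\operatorname{coker}\bigl(\operatorname{H}^1(k,S)\to \bigoplus_v \operatorname{H}^1(k_v,S)\bigr)$ with the kernel of $\operatorname{H}^1(k,\operatorname{Pic}\overline{X})^{\sim}\to \operatorname{H}^2(k,S)$, and identifies the image of this last map with $\Sha^2(k,S)$. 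Combining these with the identifications of the previous paragraphs yields exactly the short exact sequence $0 \to A(T) \to \operatorname{H}^1(k,\operatorname{Pic}\overline{X})^{\sim}\to \Sha(T)\to 0$, as desired.
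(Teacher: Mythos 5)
Your argument is essentially correct, but note that the paper does not prove this statement at all: it simply cites Theorem 6 of Voskresenski\u{\i}'s 1970 paper, whose original argument predates the flasque-resolution formalism and works directly with $\operatorname{Pic}$ of the compactification and Nakayama--Tate duality. What you have reconstructed is the now-standard Colliot-Th\'el\`ene--Sansuc/Sansuc proof: resolve $\hat{T}$ by the permutation module of boundary divisors so that $\hat{S}=\operatorname{Pic}\overline{X}$ is flasque, dualise to $1\to S\to Q\to T\to 1$ with $Q$ quasi-trivial, and splice the resulting identifications $A(T)\cong\operatorname{coker}\bigl(\operatorname{H}^1(k,S)\to\bigoplus_v\operatorname{H}^1(k_v,S)\bigr)$ and $\Sha(T)\cong\Sha^2(k,S)$ into the Poitou--Tate sequence for $S$. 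All the ingredients you list are correct and the steps you flag as delicate are exactly the right ones: the identification of $\overline{T(k)}$ requires that $Q(k_v)\to T(k_v)$ be open and surjective for almost all $v$ (the latter from $\operatorname{H}^1(k_v,S)=0$ at places unramified in the splitting field, using flasqueness and periodicity of cyclic cohomology), and the surjectivity of $T(k)\to\operatorname{H}^1(k,S)$ from $\operatorname{H}^1(k,Q)=0$; the identification $\Sha^1(k,T)\cong\Sha^2(k,S)$ additionally uses $\Sha^2(k,Q)=0$ (Albert--Brauer--Hasse--Noether), which you correctly invoke. The trade-off is that your route leans on the Poitou--Tate sequence for tori (Milne, \emph{Arithmetic Duality Theorems}, I.4.20) and the CTS machinery, whereas the citation to Voskresenski\u{\i} keeps the paper self-contained at the cost of opacity; your version has the advantage of making visible exactly why the middle term is the dual of $\operatorname{H}^1(k,\operatorname{Pic}\overline{X})$ and of feeding directly into Theorem \ref{thmcs} and Proposition \ref{char}, which the paper uses immediately afterwards.
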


\begin{proof}
See Theorem 6 of \cite{vosk}.
\end{proof}
\smallskip

Let us now specialize $T$ to be the norm one torus $R^{1}_{K/k} \mathbb{G}_m$ of an extension $K/k$ of number fields. In this case, we have $ \mathfrak{K}(K/k) \cong \Sha(T) $ (see p. 307 of \cite{platonov}). Therefore, the cohomology group $\operatorname{H}^1(k,\operatorname{Pic}\overline{X})$ in the previous theorem is pivotal in the study of the HNP for $K/k$ and weak approximation for $T$. A very useful tool to deal with this object is flasque resolutions, as introduced in the work of Colliot-Thélène and Sansuc. We recall here the main definitions and refer the reader to \cite{coll} and \cite{coll2} for more details on this topic.

\subsection*{Flasque resolutions}
Let $G$ be a finite group and let $A$ be a $G$-module. The module $A$ is said to be \textit{flasque} if $\hat{\operatorname{H}}^{-1}(G',A)=0$ for every subgroup $G'$ of $G$ and \textit{coflasque} if $\operatorname{H}^{1}(G',A)=0$ for every subgroup $G'$ of $G$. Moreover, $A$ is called a \textit{permutation} module if it admits a $\Z$-basis permuted by $G$ and an \textit{invertible} module if it is a direct summand of a permutation module. A \textit{flasque resolution} of $A$ is an exact sequence of $G$-modules

\begin{equation*}
    0 \to A \to P \to M \to 0
\end{equation*}

\medskip
\noindent where $P$ is a permutation module and $M$ is flasque. Dually, a \textit{coflasque resolution} of $A$ is an exact sequence of $G$-modules

\begin{equation*}
    0 \to N \to Q \to A \to 0
\end{equation*}

\medskip
\noindent where $Q$ is a permutation module and $N$ is coflasque. 

\medskip

It turns out that there is a very direct relation between the group $\operatorname{H}^1(k,\operatorname{Pic}\overline{X})$ and flasque resolutions of the $G$-module $\hat{T}$, as the following result shows.

\medskip
\begin{thm}[Colliot-Thélène \& Sansuc]\label{thmcs}
Let $T$ be a torus defined over a number field $k$ and split by a finite Galois extension $F/k$ with $G=\operatorname{Gal}(F/k)$. Suppose that

\begin{equation*}
    0 \to \hat{T} \to P \to M \to 0
\end{equation*}

\medskip
\noindent is a flasque resolution of the $G$-module $\hat{T}$ and let $X/k$ be a smooth projective model of $T$. Then, we have

\begin{equation*}
    \operatorname{H}^1(k,\operatorname{Pic}\overline{X}) = \operatorname{H}^1(G,\operatorname{Pic}X_F) = \operatorname{H}^1(G,M) .
\end{equation*}
\end{thm}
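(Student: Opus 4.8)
The plan is to prove the two displayed equalities separately: the first is a Galois-descent statement, while the second is where the flasque machinery genuinely enters. For the descent step, I would begin by noting that $\operatorname{Pic}\overline{X}$ is a finitely generated torsion-free module under $\Gal(\overline{k}/k)$ on which the subgroup $\Gal(\overline{k}/F)$ acts trivially: since $T$ splits over $F$, a smooth compactification $X$ together with its boundary divisor can be taken defined over $F$, so the boundary components, and hence the Galois action on $\operatorname{Pic}\overline{X}$, are already seen at the level of $G=\Gal(F/k)$. This identifies $\operatorname{Pic}\overline{X}$ with $\operatorname{Pic}X_F$ as $G$-modules. I would then feed this into the inflation--restriction sequence for $\Gal(\overline{k}/F)\trianglelefteq\Gal(\overline{k}/k)$: the restriction term is $\operatorname{H}^1(\Gal(\overline{k}/F),\operatorname{Pic}\overline{X})$, which vanishes because a continuous homomorphism from a profinite group to a finitely generated torsion-free abelian group is trivial. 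Hence inflation yields $\operatorname{H}^1(G,\operatorname{Pic}X_F)\cong\operatorname{H}^1(k,\operatorname{Pic}\overline{X})$, the first equality.

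For the second equality, the key is to produce a \emph{canonical} flasque resolution of $\hat{T}$ out of the geometry of $X$ and compare it with the given one. Writing the boundary $X\setminus T$ as a union of prime divisors $D_i$ over $\overline{k}$, the localization (excision) sequence in the Picard group, combined with $\operatorname{Pic}\overline{T}=0$ and the identification of the characters of $T$ with the principal boundary divisors, produces the exact sequence
\[
0 \to \hat{T} \to \bigoplus_i \Z D_i \to \operatorname{Pic}\overline{X} \to 0.
\]
Here $\bigoplus_i \Z D_i$ is a permutation $G$-module, since $G$ merely permutes the geometric boundary components. The essential geometric input of Colliot-Thélène and Sansuc is that $\operatorname{Pic}\overline{X}$ is itself a flasque module; granting this, the displayed sequence is a flasque resolution of $\hat{T}$ with flasque quotient $\operatorname{Pic}\overline{X}$.

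It then remains to compare this resolution with the given $0\to\hat{T}\to P\to M\to 0$. I would invoke the uniqueness of the flasque class: the flasque quotients of any two flasque resolutions of a fixed module agree up to addition of permutation modules, so $M\oplus P_1\cong\operatorname{Pic}\overline{X}\oplus P_2$ for suitable permutation modules $P_1,P_2$. Since a permutation module is a direct sum of modules $\Z[G/H]$, Shapiro's lemma gives $\operatorname{H}^1(G,\Z[G/H])\cong\operatorname{H}^1(H,\Z)=\Hom(H,\Z)=0$, so adjoining permutation summands does not alter $\operatorname{H}^1(G,-)$. Applying $\operatorname{H}^1(G,-)$ to the stable isomorphism then yields $\operatorname{H}^1(G,M)\cong\operatorname{H}^1(G,\operatorname{Pic}\overline{X})=\operatorname{H}^1(G,\operatorname{Pic}X_F)$, which together with the descent step establishes the full chain of equalities.

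The two steps I expect to be the genuine obstacles are precisely the inputs I am quoting from Colliot-Thélène and Sansuc: the flasqueness of $\operatorname{Pic}\overline{X}$ for a smooth compactification of a torus (a cohomological computation on the boundary divisors, verifying that $\hat{\operatorname{H}}^{-1}(G',\operatorname{Pic}\overline{X})=0$ for every subgroup $G'$), and the uniqueness of the flasque class modulo permutation modules. By contrast, the descent argument and the invariance of $\operatorname{H}^1$ under permutation summands are formal.
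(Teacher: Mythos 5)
Your argument is correct and follows essentially the same route as the source the paper cites for this result (Lemme 5 and Proposition 6 of Colliot-Th\'el\`ene--Sansuc, \emph{La R-\'equivalence sur les tores}): the paper gives no proof beyond that citation, and your two quoted inputs --- the flasqueness of $\operatorname{Pic}\overline{X}$ via the divisor sequence $0 \to \hat{T} \to \bigoplus_i \Z D_i \to \operatorname{Pic}\overline{X} \to 0$, and the uniqueness of the flasque class up to permutation summands --- are precisely the content of those two references, while the descent step via inflation--restriction and the vanishing of $\operatorname{H}^1(G,\Z[G/H])$ are the standard formal glue. Nothing further is needed.
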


\begin{proof}
See Lemme 5 and Proposition 6 of \cite{coll}.
\end{proof}

\medskip

We proceed by presenting a very useful description of the group $\operatorname{H}^1(G,M) $ in the conclusion of the previous theorem.

\medskip
\begin{prop}\label{char}
$\operatorname{H}^1(G,M)=\Sha_{\omega}^2(G,\hat{T})$.
\end{prop}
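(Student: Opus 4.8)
The goal is to identify $\operatorname{H}^1(G,M)$, where $M$ is the flasque part of a flasque resolution of $\hat{T}$, with the group $\Sha^2_\omega(G,\hat{T})$. The plan is to exploit the fact that for a norm one torus $T = R^1_{K/k}\mathbb{G}_m$, the character module $\hat{T}$ is precisely the Chevalley module $J_{G/H}$ appearing in the exact sequence \eqref{eta}, where $H = \operatorname{Gal}(F/K)$. Thus $\Sha^2_\omega(G,\hat{T}) = \Sha^2_\omega(G,J_{G/H})$, the object computed cohomologically in Section 2. First I would recall the standard dictionary, due to Colliot-Thélène and Sansuc, relating the cohomology of a flasque module $M$ to the \emph{flasque invariants} of $\hat{T}$: because $M$ is flasque and $P$ is a permutation module, taking $G'$-cohomology of the resolution $0 \to \hat{T} \to P \to M \to 0$ shows that $\operatorname{H}^1(G',M)$ is an invariant of $\hat{T}$ independent of the chosen resolution, and dimension-shifting identifies it with a suitable Tate cohomology group of $\hat{T}$.

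The key technical step is to make this identification explicit and compatible with restriction to cyclic subgroups. From the long exact sequence in $G'$-cohomology of the flasque resolution, together with $\operatorname{H}^1(G',P) = 0$ for the permutation module $P$ (Shapiro plus the vanishing of $\operatorname{H}^1$ of $\mathbb{Z}$), one obtains a connecting isomorphism $\operatorname{H}^1(G',M) \cong \operatorname{H}^2(G',\hat{T})$ for every subgroup $G' \leq G$, functorial in $G'$. Applying this with $G' = G$ and with $G' = \langle g \rangle$ for each $g \in G$, and using naturality of the connecting map with respect to restriction, I would assemble a commutative diagram
\begin{equation*}
\begin{tikzcd}
\operatorname{H}^1(G,M) \arrow{r}{\cong} \arrow{d}{\operatorname{Res}} & \operatorname{H}^2(G,\hat{T}) \arrow{d}{\operatorname{Res}} \\
\prod_{g \in G} \operatorname{H}^1(\langle g \rangle, M) \arrow{r}{\cong} & \prod_{g \in G} \operatorname{H}^2(\langle g \rangle, \hat{T})
\end{tikzcd}
\end{equation*}
in which both horizontal arrows are isomorphisms and the vertical arrows are the aggregated restriction maps. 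The kernel of the right-hand restriction is by definition $\Sha^2_\omega(G,\hat{T})$.

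The decisive observation is then that the left-hand restriction map is the zero map, so that $\operatorname{H}^1(G,M)$ coincides with its own kernel and hence with $\Sha^2_\omega(G,\hat{T})$ via the top isomorphism. This vanishing holds because $M$ is flasque and, more to the point, invertible up to the resolution: restricting a flasque resolution to a cyclic group $\langle g \rangle$ makes $M$ into a cohomologically trivial (indeed invertible, hence flasque and coflasque over a cyclic group) module, forcing $\operatorname{H}^1(\langle g \rangle, M) = 0$ for every $g$. I expect the main obstacle to be verifying precisely this step — that $\operatorname{H}^1(\langle g\rangle, M)$ vanishes — which rests on the characterization of flasque modules (invertible over cyclic subgroups, so with trivial $\hat{\operatorname{H}}^{-1}$, and by periodicity trivial $\operatorname{H}^1$) and on getting the naturality of the dimension-shifting isomorphism correct. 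Once that vanishing and the commutativity of the diagram are in hand, the identification $\operatorname{H}^1(G,M) = \Sha^2_\omega(G,\hat{T})$ follows immediately.
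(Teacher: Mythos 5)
The paper does not actually reprove this statement; it simply cites Proposition 9.5(ii) of Colliot-Th\'el\`ene--Sansuc. Your attempt to supply a real argument is reasonable in outline, but it has a genuine gap at its central step. From $0\to\hat T\to P\to M\to 0$ the long exact sequence gives
\[
0=\operatorname{H}^1(G',P)\to\operatorname{H}^1(G',M)\xrightarrow{\ \delta\ }\operatorname{H}^2(G',\hat T)\to\operatorname{H}^2(G',P),
\]
so $\delta$ is injective, but it is \emph{not} an isomorphism in general: writing $P=\bigoplus_i\Z[G/H_i]$, Shapiro gives $\operatorname{H}^2(G',P)\cong\bigoplus_i\operatorname{H}^2(H_i\cap G',\Z)\cong\bigoplus_i\operatorname{Hom}(H_i\cap G',\Q/\Z)$, which need not vanish. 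Your claimed ``connecting isomorphism $\operatorname{H}^1(G',M)\cong\operatorname{H}^2(G',\hat T)$'' is therefore false; worse, if it held for $G'=G$ and were compatible (as you assert) with the vanishing of $\operatorname{H}^1(\langle g\rangle,M)$, it would force $\Sha^2_\omega(G,\hat T)=\operatorname{H}^2(G,\hat T)$, which fails in general and would render the proposition content-free.

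What your argument does correctly establish is that $\delta$ injects $\operatorname{H}^1(G,M)$ \emph{into} $\Sha^2_\omega(G,\hat T)$: since $M$ is flasque and Tate cohomology of a cyclic group is $2$-periodic, $\operatorname{H}^1(\langle g\rangle,M)\cong\hat{\operatorname{H}}^{-1}(\langle g\rangle,M)=0$, and naturality of $\delta$ then places the image of $\operatorname{H}^1(G,M)$ inside the kernel of the aggregated restriction. The missing half is surjectivity onto $\Sha^2_\omega(G,\hat T)$: given $y\in\Sha^2_\omega(G,\hat T)$ you must show that $y$ maps to zero in $\operatorname{H}^2(G,P)$, so that exactness lifts it to $\operatorname{H}^1(G,M)$. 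This holds because, under Shapiro, the image of $y$ in $\operatorname{H}^2(G,\Z[G/H_i])\cong\operatorname{Hom}(H_i,\Q/\Z)$ is a character whose restriction to every cyclic subgroup of $H_i$ vanishes (as $y$ restricts to zero on all cyclic subgroups of $G$), and characters of a finite group are detected on its cyclic subgroups. Supplying this step --- which is precisely why the answer is $\Sha^2_\omega(G,\hat T)$ rather than all of $\operatorname{H}^2(G,\hat T)$ --- repairs the proof and recovers the result the paper quotes from Colliot-Th\'el\`ene and Sansuc.
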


\begin{proof}
See Proposition 9.5(ii) of \cite{coll2}.
\end{proof}

\medskip
Using this characterization, we can now prove Theorem \ref{thman} for $n \neq 6$ (the case $n=6$ will be treated separately in the next section).

\medskip
\begin{proof}[Proof of Theorem \ref{thman} for $n \neq 6$]
 Set $G=\Gal(F/k) \cong A_n$ and $H=\Gal(F/K)$. Observe that such a group $H$ is necessarily isomorphic to $A_{n-1}$, since it has index $n$ in $A_n$. We have two cases:
 
 \smallskip

\textbf{Case} $\bm{n \geq 8}$: By Theorems \ref{thmvosk} and \ref{thmcs} and Proposition \ref{char}, it is enough to establish that the group $ \Sha_{\omega}^2(G,\hat{T})$ is trivial, where $T=R^{1}_{K/k} \mathbb{G}_m$ is the norm one torus associated to the extension $K/k$. Moreover, it is a well-known fact that $\hat{T}=J_{G/H}$ as $G$-modules, so it is sufficient to prove that $ \Sha_{\omega}^2(G,J_{G/H})=0$. But this was shown in Proposition \ref{sha2} of Section 2, so the result follows.

\smallskip

\textbf{Cases} $\bm{n =5} \textbf{ and } \bm{n=7}$: Since $n$ is a prime number, these cases follow from a direct application of Proposition 9.1 of \cite{coll2}. In this proposition, the authors show that there exists a $k$-torus $T_1$ such that the variety $T \times_{k} T_1$ is $k$-rational, where $T=R^{1}_{K/k} \mathbb{G}_m$ is the norm one torus associated to $K/k$. This result is in its turn equivalent to the fact that any flasque module $M$ in a flasque resolution of $\hat{T}$ is invertible (see Proposition 9.5(i) of \cite{coll2}), which is a stronger property than being coflasque. Therefore, the group $\operatorname{H}^1(G,M)$ vanishes and so, by Theorem \ref{thmcs}, the middle group of Voskresenski\u{\i}'s exact sequence in Theorem \ref{thmvosk} is trivial. Hence, we conclude that $A(T)=0=\Sha(T)$, as desired.

\end{proof}

\section{The case $n=6$}

In this section, we finish the proof of Theorem \ref{thman} by using the computer algebra system GAP to establish the remaining case $n=6$. More precisely, we devise an algorithm that, given a finite group $G$ and a non-normal subgroup $H$ such that $\operatorname{Core}_G(H):=\bigcap\limits_{g \in G} g^{-1}Hg$ is trivial (for example, this is always the case if $G$ is simple), outputs the invariant $\operatorname{H}^1(G,M)$ of Theorem \ref{thmcs} for the norm one torus. We use two ingredients to achieve this: First, we construct a routine in GAP that computes the matrix representation of the action of $G$ on the Chevalley module $J_{G/H}$. Second, we make use of the GAP algorithms\footnote{The code for these algorithms is available on the web page \url{https://www.math.kyoto-u.ac.jp/~yamasaki/Algorithm/RatProbAlgTori/}.} developed by Hoshi and Yamasaki in \cite{hoshi} to construct flasque resolutions. Before we present our method, we need a few preliminaries.

\smallskip

\begin{defn}[Definition 1.26 of \cite{hoshi}]
Let $G$ be a finite subgroup of $\operatorname{GL}(n, \Z)$. The $G$-lattice $M_G$ is defined to be the $G$-lattice with a $\Z$-basis $\{u_1, \dots , u_n\}$ and right action of $G$ given by
$u_i.g = \sum\limits_{j=1}^{n}
 a_{i,j}u_j$, where $g = [a_{i,j} ]_{i,j=1}^{n} \in G$.
\end{defn}

In \cite{hoshi} the authors study the rationality of low-dimensional algebraic tori via the properties of the corresponding group modules, for which they create multiple algorithms. In particular, given a finite subgroup $G$ of $ \operatorname{GL}(n,\Z)$, they design the functions $\code{H1}$ and $\code{FlabbyResolution}$ (see Sections 5.0 and 5.1 of \cite{hoshi}, respectively) computing the cohomology group $\operatorname{H}^1(G,M_G)$ and producing a flasque resolution of the $G$-module $M_G$, respectively. For instance, by invoking the command 

\medskip

   \noindent $\code{gap> FlabbyResolution(G).actionF;}$

\medskip
\noindent in GAP, one can access the matrix representation of the action of $G$ on a flasque module in a flasque resolution of $M_G$.

\medskip
 Let $G$ be a finite group and $H$ a non-normal subgroup of $G$ with trivial normal core $\operatorname{Core}_G(H)$. Set $d=|G/H|$ and fix a set of right-coset representatives  ${L=\{ H g_1, \dots ,  H g_d\}}$ of $H$ in $G$. In this way, we have $\Z[G/H] = \sum\limits_{i=1}^{d}  H g_i \Z$ and $N_{G/H}=\sum\limits_{i=1}^{d} H g_i \in \Z[G/H]$.

\medskip
\begin{sloppypar}
Our first goal is to establish an isomorphism between the $G$-module $J_{G/H}$ and the $R_G$-module $M_{R_G}$, where ${R_G \leq \operatorname{GL}(d-1, \Z)}$ is a group (to be defined below) isomorphic to $G$. We accomplish this by using the representation of $G$ associated to its right action on $J_{G/H}$. More precisely, consider the $\Z$-basis 

\medskip
\begin{center}
    $B=\{ H g_1 + N_{G/H} \Z, \dots ,H g_{d-1} + N_{G/H} \Z\}$
\end{center} 
\medskip

\noindent of $J_{G/H}$. Since the submodule $ N_{G/H} \Z$ is fixed by the action of any element of $G$, we will omit it when working with elements of $B$. Given $g \in G$, we build a matrix $R_g \in \operatorname{GL}(d-1, \Z)$ as follows. 
\end{sloppypar}
 For any $H g_i  \in B$, we have $ (H g_i).g = Hg_{\sigma(i)}$ for some $1 \leq \sigma(i) \leq d$. There are two cases:

\bigskip
\textbf{1)} If $\sigma(i) < d$, then the $k$-th entry of the $i$-th row of $R_g$ is set to be equal to $1$ if $k=\sigma(i)$ and $0$ otherwise. 

\smallskip
\textbf{2)} If $\sigma(i) = d$, then the $k$-th entry of the $i$-th row of $R_g$ is set to be equal to $-1$ for every $k$.

\bigskip
Let $R_G$ be the group $\langle R_g \text{ } | \text{ } g \in G \rangle \leq \operatorname{GL}(d-1,\Z)$. It is easy to see that the map

\begin{align*}
  \rho_G   \colon & G  \longrightarrow  R_G \\
   & g \longmapsto  R_g
\end{align*}

\medskip
\noindent is the representation of $G$ corresponding to its action on $J_{G/H}$. Clearly we have $\ker \rho_G = \operatorname{Core}_G(H)$, which we are assuming is trivial. Hence, $\rho_G$ is faithful and thus it yields an isomorphism $G \cong R_G$. Moreover, identifying $R_g \in R_G$ with the corresponding element $g \in G$, it is straightforward to check that the map

\begin{align*}
  \psi  \colon & M_{R_G}  \longrightarrow  J_{G/H} \\
   & \sum\limits_{i=1}^{d-1} \lambda_i u_i \longmapsto  \sum\limits_{i=1}^{d-1} \lambda_i  H g_i+ N_{G/H}\Z
\end{align*}

\medskip

\noindent defines an isomorphism of group modules. 

\medskip
With the tools introduced so far, we are now able to construct the function $\code{FlasqCoho(G,H)}$ (presented in the Appendix) in GAP that computes the cohomology group $\operatorname{H}^1(G,M)$, where $M$ is a flasque module in a flasque resolution of the Chevalley module $J_{G/H}$. The necessary steps to assemble this function are the following.

\bigskip

\textbf{Step 1)} Fix a set $\code{gens}$ of generators of $G$ and construct the matrix group $R_G=\langle R_g \text{ } | \text{ } g \in \code{gens} \rangle$ using the following two functions
\smallskip

\begin{itemize}
    \item $\code{row(s,d)}$ (an auxiliary routine to \code{action}), returning the $i$-th row of the matrix $R_g$ as explained on page 9;
    \item $\code{action(G,H)}$, constructing the matrices $R_g$ for $g \in \code{gens}$ and returning the group $R_G$.

\end{itemize}

\smallskip
The code for these two functions is also provided in the Appendix. The group $R_G=\code{action(G,H)}$ is then a subgroup of $\operatorname{GL}(d-1,\Z)$ isomorphic to $G$ such that $M_{R_G} \cong J_{G/H}$.

\medskip

\textbf{Step 2)} Create a flasque resolution of the $R_G$-module $M_{R_{G}}$ and access its flasque module $M'$ using the commands

\bigskip
   \noindent $\code{gap> FR:=FlabbyResolution(RG);}$
   
  \noindent  $\code{gap> FM:=FR.actionF;}$

\bigskip

The object $\code{FM}$ is the matrix representation group of the action of $R_G$ on $M'$. Note that, by the inflation-restriction exact sequence, we have $\operatorname{H}^1(R_G,M')\cong \operatorname{H}^1(\code{FM},M_{\code{FM}})$.

\medskip

\textbf{Step 3)} Obtain the group $\operatorname{H}^1(\code{FM},M_{\code{FM}})\cong \operatorname{H}^1(G,M)$ using the function $\code{H1}$.

\bigskip

   \noindent $\code{gap> H1(FM);}$

\bigskip

The result of this line is the final output of the algorithm.

\bigskip

Using this computational method, we can now establish the remaining case of Theorem 1.1.

\begin{proof}[Proof of the case $n = 6$ in Theorem \ref{thman}]

Set $G=\Gal(F/k) \cong A_6$ and $H = \Gal(F/K)$. By Theorems \ref{thmvosk} and \ref{thmcs}, it is enough to prove that the cohomology group $\operatorname{H}^1(G,M)$ is trivial, where $M$ is a flasque module in a flasque resolution of the $G$-module $\hat{T}=J_{G/H}$ and $T=R^{1}_{K/k} \mathbb{G}_m$ is the norm one torus associated to the extension $K/k$. 

As in the case $n \neq 6$, we have $H \cong  A_5$. Notice that, up to conjugation, there are exactly two distinct subgroups of $A_6$ isomorphic to $A_5$, namely ${H_1=\langle (1 \hspace{5pt} 2 \hspace{5pt}3\hspace{5pt}4\hspace{5pt}5),(1\hspace{5pt}2\hspace{5pt}3) \rangle}$ and $H_2=\langle (1\hspace{5pt}2\hspace{5pt}3\hspace{5pt}4\hspace{5pt}5),(1\hspace{5pt}4)(5\hspace{5pt}6) \rangle$. Moreover, it suffices to check the vanishing of $\operatorname{H}^1(G,M)$ for one subgroup $H$ in each conjugacy class (this follows from the fact that two subgroups $H_1$ and $H_2$ are conjugate if and only if the two $G$-sets $G/H_1$ and $G/H_2$ are isomorphic). Using the above algorithm, we obtained $\operatorname{H}^1(G,M)=0$ in both cases, as desired.

\end{proof}

\begin{rem}
The computation used for the case $n=6$ in the previous proof can be reproduced for other small values of $n$. We have checked that for $n \leq 11$ the algorithm confirms our results, giving the trivial group for $n \neq 4$ and producing the counterexample $\operatorname{H}^1(A_4,M)=\Z / 2$ for $n=4$, as computed by Kunyavski\u{\i} in \cite{kun2}. 

The authors of \cite{hoshi} also pay special attention to the case $n=5$ (see Example 8.1 of \cite{hoshi}). In this case, they establish that the torus $T=R^{1}_{K/k} \mathbb{G}_m$ is stably $k$-rational (see Corollary 1.11 of \cite{hoshi}), i.e. that there exists $n \in \mathbb{N}$ such that $T \times_{k} \mathbb{G}_{m}^n$ is $k$-rational. In the language of group modules, this is equivalent to any flasque module $M$ in a flasque resolution of $\hat{T}$ being a permutation module, which is a stronger property than being coflasque.

\end{rem}

\medskip

The computational method developed in this section might be of independent interest, as it can often be used to compute the birational invariant $\operatorname{H}^1(G,M)$ for low-degree field extensions and, in this way, deduce consequences about the groups $A(T)$ and $\mathfrak{K}(K/k)$.

\pagebreak
\section*{Appendix}

\begin{rem}

The code for all the functions below can also be found at {\url{https://sites.google.com/view/andre-macedo/code}}. Additionally, in order to successfully run the function \code{FlasqCoho}, the user will need the GAP programs for the functions $\code{ConjugacyClassesSubgroups2}$, $\code{H1}$ and $\code{FlabbyResolution}$ (see Sections 4.1, 5.0 and 5.1 of \cite{hoshi}, respectively).
\end{rem}

\begin{lstlisting}
row:=function(s,d)
    local r,k;

    r:=[]; &\Comment{\textcolor{gray}{// i-th row of $R_g$}}&
	
    if s = d then &\Comment{\textcolor{gray}{// Case 2 of page 9}}&
	    r:=List([1..d-1],x->-1);
    else &\Comment{\textcolor{gray}{// Case 1 of page 9}}&
	    for k in [1..d-1] do 
		    if k = s then
			    r:=Concatenation(r,[1]);
		    else
			    r:=Concatenation(r,[0]);
	    	fi;
	    od;
    fi;
    return r; 
end;
\end{lstlisting}
\medskip

\begin{lstlisting}
action:=function(G,H)
    local d,gens,RT,L,S,j,Rg,i,s;
    	
    d:=Order(G)/Order(H);
    gens:=GeneratorsOfGroup(G);
    RT:=RightTransversal(G,H); 
    L:=List(RT,i->CanonicalRightCosetElement(H,i));
    &\Commenttt{\textcolor{gray}{// List of right-coset representatives of $H$ in $G$}}&
    S:=[]; &\Commentt{\textcolor{gray}{// List of matrices $R_g$ for $g \in \code{gens}$}}&
    
    for j in [1..Size(gens)] do
        Rg:=List([1..d-1],x->0); &\Commentt{\textcolor{gray}{// Create a matrix with $d-1$ lines}}&
       	for i in [1..d-1] do
       	    s:=PositionCanonical(RT,L[i]*gens[j]);
       	    &\Commenttt{\textcolor{gray}{// Obtain the index s $ =\sigma(i)$ of the right-coset $(H * L[i]).$\code{gens}[j]  in $RT$ }}&
       	    &\Commenttt{\textcolor{gray}{as explained on page 9}}&
            Rg[i]:=row(s,d); &\Commentt{\textcolor{gray}{// Produce the $i$-th row of $R_{\code{gens}[j]}$}}&
        od;
        S:=Concatenation(S,[Rg]); &\Commentt{\textcolor{gray}{// Append the matrix $R_{\code{gens}[j]}$ to $S$}}&
    od;
    return GroupByGenerators(S);  &\Commentt{\textcolor{gray}{// Return the group $R_G$}}&
end;
\end{lstlisting}

\medskip

\begin{lstlisting}
FlasqCoho:=function(G,H)
    local RG,FR,FM;

    RG:=action(G,H); &\Commentt{\textcolor{gray}{// Matrix group $R_G$}}&
    FR:=FlabbyResolution(RG); &\Commentt{\textcolor{gray}{// Flasque resolution of $R_G$}}&
    FM:=FR.actionF; &\Commentt{\textcolor{gray}{// Flasque module in \code{FR}}}&
        
    return H1(FM); &\Commentt{\textcolor{gray}{// Return the cohomology group $\operatorname{H}^1(G,M)$}}&
end;

\end{lstlisting}


\begin{thebibliography}{}
  
\bibitem{bar1}
  H.-J. Bartels, Zur Arithmetik von Konjugationsklassen in algebraischen Groupen, \textit{J. Algebra} \textbf{70} (1981), 179-199.
  
  \bibitem{bar2}
  H.-J. Bartels, Zur Arithmetik von Diedergruppenerweiterungen, \textit{Math. Ann.} \textbf{256} (1981), 465-473.
  
\bibitem{brown}
  K. Brown, \textit{Cohomology of groups}, Graduate Texts in Mathematics \textbf{87}, Springer-Verlag, 1982.

 \bibitem{coll}
 J.-L. Colliot-Thélène, J.-J. Sansuc, La R-équivalence sur les tores. \textit{Ann. Sc. E.N.S.} \textbf{10} (1977), 175–229.

 \bibitem{coll2}
 J.-L. Colliot-Thélène, J.-J. Sansuc, Principal homogeneous spaces under flasque tori: Applications, \textit{J. Algebra} \textbf{106} (1987), 148–205.

\bibitem{DP87}
{Y. A. Drakokhrust, V. P. Platonov}, 
The Hasse norm principle for algebraic number fields,
 \textit{Math. USSR-Izv.} \textbf{29} (1987), 299-322.

\bibitem{frei}
  C. Frei, D. Loughran, R. Newton,
  The Hasse norm principle for abelian extensions,
  \textit{Amer. J. Math}, to appear.
  
\bibitem{gap} The GAP Group, GAP -- Groups, Algorithms, and Programming, Version 4.8.10 (2018), available at \url{https://www.gap-system.org}.

   \bibitem{gerth}
  F. Gerth, The Hasse norm principle for abelian extensions of number fields, \textit{Bulletin of the AMS}, Vol. \textbf{83} (1977), 264-266.
    
   \bibitem{gru}
  K. W. Gruenberg,  \textit{Cohomological Topics in Group Theory}, Lecture Notes in Mathematics \textbf{143}, Springer-Verlag, 1970.
  
\bibitem{HH92}
P. N. Hoffman, J. F. Humphreys, \textit{Projective representations of the symmetric groups}, Oxford Mathematical Monographs, Clarendon Press, Oxford, 1992.

   \bibitem{horie}
 M. Horie, The Hasse norm principle for elementary abelian extensions, \textit{Proc. Amer. Math.
Soc.} \textbf{118}(1) (1993), 47–56.


 \bibitem{hoshi}
  A. Hoshi, A. Yamasaki,
 \textit{Rationality problem for algebraic tori},
Memoirs of the American Mathematical Society \textbf{248} (2017), no. 1176.

\bibitem{kar}
G. Karpilovsky, \textit{The Schur Multiplier}, Clarendon Press, Oxford, 1987.

\bibitem{kun2}
 B. Kunyavski\u{\i}, Arithmetic properties of three-dimensional algebraic tori, \textit{Zap. Nauch. Sem. LOMI Akad. Nauk SSSR} \textbf{16} (1982), 102-107.



\bibitem{milne}
  J. S. Milne, Class Field Theory, Version 4.02 (2013), available at \url{http://www.jmilne.org/math/CourseNotes/CFT.pdf}.
  
  
  \bibitem{platonov}
  V. Platonov, A. Rapinchuk, \textit{Algebraic groups and number theory}, Pure and Applied Mathematics,
139, Academic Press, Inc., Boston, MA, 1994.


\bibitem{S11}
J. Schur, Über die Darstellung der symmetrischen und der alternierenden Gruppe durch gebrochene lineare Substitutionen,  \textit{J. reine angew. Math.} \textbf{139} (1911), 155–250.

  \bibitem{vosk}
  V. Voskresenski\u{\i}, Birational properties of linear algebraic groups, \textit{Izv. Akad. Nauk SSSR
Ser. Mat.} \textbf{34} (1970) 3–19. English translation: \textit{Math. USSR-Izv.} Vol. \textbf{4} (1970), 1-17.


\bibitem{kun1}
V. Voskresenski\u{\i}, B. Kunyavski\u{\i}, Maximal tori in semisimple algebraic groups, Manuscript deposited at VINITI 15.03.84, no. \textbf{1269-84}, 28pp. (in Russian).



\bibitem{wil}
R. A. Wilson, \textit{The finite simple groups}, Graduate Texts in Mathematics \textbf{251}, Springer-Verlag, 2009.


\end{thebibliography}
\end{document}